\title[Modular forms of $\mathrm{O}(2,4;\mathbb{Z})$]{The ring of modular forms of $\mathrm{O}(2,4;\mathbb{Z})$ with characters}
\author[A.~Nagano]{Atsuhira Nagano}
\address{
Faculty of Mathematics and Physics,
Institute of Science and Engineering,
Kanazawa University,
Kakuma, Kanazawa, Ishikawa,
920-1192, Japan}
\email{atsuhira.nagano@gmail.com}
\author[K.~Ueda]{Kazushi Ueda}
\address{
Graduate School of Mathematical Sciences,
The University of Tokyo,
3-8-1 Komaba,
Meguro-ku,
Tokyo,
153-8914,
Japan.}
\email{kazushi@ms.u-tokyo.ac.jp}
\date{}
\begin{document}

\begin{abstract}
We show that the ring of modular forms with characters
for $\mathrm{O}(2,4;\mathbb{Z})$
is generated by forms of weights 4, 4, 6, 8, 10, 10, 12, and 30
with three relations of weights 8, 20, and 60.
The proof is based on the study of a moduli space of K3 surfaces.
\end{abstract}

\maketitle

\section{Introduction}

Given a primitive sublattice $P$
of signature $(1,\rho-1)$
of the K3 lattice
$
 \KL \coloneqq E_8 \bot E_8 \bot U \bot U \bot U,
$
let
$
 T = T_P \coloneqq P^\bot
$
be the orthogonal lattice and
$\Gamma = \Gamma_P$ be the subgroup of $\rO(\KL)$
consisting of elements $g$
satisfying $g|_P = \id_P$.
By abuse of notation,
we write the image of $\Gamma$
under the injection
$
 g \mapsto g|_{T} \in \rO(T)
$
by the same symbol.
A \emph{$P$-polarized K3 surface}
in the sense of Nikulin \cite{MR544937}
is a pair
$(Y, j)$ of a K3 surface $Y$
and a primitive lattice embedding
$
 j \colon P \hookrightarrow \Pic Y.
$
As explained in \cite[Section 3]{MR1420220},
the global Torelli theorem \cite{MR0284440, MR0447635}
and the surjectivity of the period map \cite{MR592693}
shows that the period map gives an isomorphism
from the coarse moduli scheme of
pseudo-ample $P$-polarized K3 surfaces
to the quotient
$
 M \coloneqq \cD/\Gamma
$
of the bounded Hermitian domain
\begin{align} \label{eq:D}
 \cD \coloneqq \lc \ld \Omega \rd \in \bfP(T \otimes \bC)
  \relmid (\Omega, \Omega) = 0, \ (\Omega, \Omegabar) > 0 \rc
\end{align}
of type IV.

Let
$T$ be a lattice of signature $(2,n)$ for a positive integer $n$
and
\begin{align}
 \cDtilde \coloneqq \lc \Omega \in T \otimes \bC \relmid (\Omega, \Omega) = 0, \ (\Omega, \Omegabar) > 0 \rc
\end{align}
be the total space of a principal $\bCx$-bundle over $\cD$
defined by \pref{eq:D}.
For a discrete subgroup $\Gamma$
of finite covolume in $\rO(T \otimes \bR)$,
a \emph{modular form}
on $\cD$
with respect to $\Gamma$
of weight $k \in \bZ$
and character $\chi \in \Char(\Gamma) \coloneqq \Hom(\Gamma, \bCx)$
is a holomorphic function
$
 f \colon \cDtilde \to \bC
$
satisfying
\begin{enumerate}[(i)]
 \item
$f(\alpha z) = \alpha^{-k} f(z)$
for any $\alpha \in \bCx$, and
 \item
$f(\gamma z) = \chi(\gamma) f(z)$
for any $\gamma \in \Gamma$.
\end{enumerate}
%\begin{align} \label{eq:mod1}
% f(\lambda z) = \lambda^{-k} f(z)
%\end{align}
%for any $\lambda \in \bCx$ and
%\begin{align} \label{eq:mod2}
% f(\gamma z) = \chi(\gamma) f(z)
%\end{align}
%for any $\gamma \in \Gamma$.
The vector spaces
$A_k(\Gamma, \chi)$
of modular forms
constitute the ring
\begin{align}
 \Atilde(\Gamma)
  \coloneqq \bigoplus_{k=0}^\infty \bigoplus_{\chi \in \Char(\Gamma)}
      A_k(\Gamma, \chi)
\end{align}
of modular forms.
We also write the subring of modular forms without characters as
\begin{align}
 A(\Gamma)
  \coloneqq \bigoplus_{k=0}^\infty
      A_k(\Gamma).
\end{align}
The main result of this paper is the following:

\begin{theorem} \label{th:main}
The graded ring of modular forms with characters
of
%the integral orthogonal group
$\rO(2,4;\bZ)$
%with respect to the standard lattice
is generated by modular forms of weights
4, 4, 6, 8, 10, 10, 12, and 30
with three relations of weights 8, 20, and 60;
\begin{align}
  \Atilde(\rO(2,4;\bZ)) \cong \bC[t_4,t_6,t_8,t_{10},t_{12},s_4,s_{10},s_{30}]
   / (s_4^2 -\Delta_8(t), s_{10}^2-\Delta_{20}(t),s_{30}^2-\Delta_{60}(t)),
\end{align}
where the polynomials $\Delta_8(t)$, $\Delta_{20}(t)$, and $\Delta_{60}(t)$
are given in \pref{eq:Delta_8}, \pref{eq:Delta_20}, and \pref{eq:Delta_60}
respectively.
\end{theorem}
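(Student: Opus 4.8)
Write $\Gamma = \rO(2,4;\bZ)$. The plan is to read off the presentation from an explicit geometric model of the moduli space $M = \cD/\Gamma$ together with a computation of the character group $\Char(\Gamma)$. One starts from the moduli interpretation recalled in the introduction: for the relevant primitive sublattice $P \subset \KL$ of signature $(1,15)$ one has $T = P^\perp$ of signature $(2,4)$ and $\Gamma_P = \Gamma$, so that $M$ is the coarse moduli space of pseudo-ample $P$-polarized K3 surfaces. Writing down an explicit such family and computing its periods, I would identify the Baily--Borel compactification $\overline{M}$ with the weighted projective space $\bfP(2,3,4,5,6)$; equivalently, the subring $A(\Gamma)$ of modular forms without characters is the polynomial ring $\bC[t_4,t_6,t_8,t_{10},t_{12}]$, where the $t_i$ are the period functions that parametrize the family (the freeness of this ring is exactly the statement that $\overline{M}$ is a weighted projective space).

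Next I would determine $\Char(\Gamma) = \Hom(\Gamma,\bCx)$. Since $\Gamma$ is generated, up to finite index, by reflections, its abelianization is an elementary abelian $2$-group, and a computation with a set of reflection generators and the discriminant form of $T$ gives $\Char(\Gamma) \cong (\bZ/2)^3$. Consequently $\Atilde(\Gamma) = \bigoplus_\chi A_*(\Gamma,\chi)$ carries an action of $A(\Gamma) = \bC[t]$ with eight summands, and each summand has rank at most one over $\bC[t]$ because any two forms of the same character differ by a $\Gamma$-invariant modular function. For each of the three generating characters I would exhibit a form realizing it, namely $s_4$, $s_{10}$, $s_{30}$ of weights $4$, $10$, $30$; these arise as Borcherds products in the sense of Gritsenko--Nikulin, or geometrically as holomorphic square roots of discriminants, vanishing with multiplicity one along the reflective Heegner divisors of $M$, with $s_{30}$ cutting out the locus over which the K3 family degenerates. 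Since each $s_\bullet$ has character of order two, $s_4^2, s_{10}^2, s_{30}^2 \in \bC[t]$, and expressing these in the $t_i$ gives the polynomials $\Delta_8(t)$, $\Delta_{20}(t)$, $\Delta_{60}(t)$ of \pref{eq:Delta_8}, \pref{eq:Delta_20}, and \pref{eq:Delta_60}; the products $s_4 s_{10}$, $s_4 s_{30}$, $s_{10} s_{30}$, $s_4 s_{10} s_{30}$ then realize the remaining four characters, so all eight summands of $\Atilde(\Gamma)$ are nonzero.

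To finish, I would prove that $t_4,\dots,t_{12},s_4,s_{10},s_{30}$ generate $\Atilde(\Gamma)$. Given a modular form $f$ of character $\chi$, the fact that $\chi$ has order dividing two makes $f$ anti-invariant under each reflection on which $\chi = -1$; inspecting a coordinate transverse to the fixed hyperplane shows that $f$ vanishes along the associated Heegner divisors, so $f$ is divisible, in the ring of holomorphic functions on $\cDtilde$, by the monomial $s_\chi$ in $s_4, s_{10}, s_{30}$ attached to $\chi$. The quotient $f/s_\chi$ is then $\Gamma$-invariant, holomorphic away from a subset of codimension at least two and hence everywhere by Hartogs, and bounded at the cusps by the Koecher principle, so $f/s_\chi \in A(\Gamma) = \bC[t]$. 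This gives a graded surjection onto $\Atilde(\Gamma)$ from the candidate ring
\[
 R \coloneqq \bC[t_4,t_6,t_8,t_{10},t_{12},s_4,s_{10},s_{30}] / (s_4^2 - \Delta_8(t),\ s_{10}^2 - \Delta_{20}(t),\ s_{30}^2 - \Delta_{60}(t)).
\]
Now $R$ is a free $\bC[t_4,\dots,t_{12}]$-module of rank $8$, with basis the monomials $s_4^a s_{10}^b s_{30}^c$ for $a,b,c \in \{0,1\}$, while $\Atilde(\Gamma)$ is an integral domain (its nonzero elements are holomorphic functions on $\cDtilde$ not vanishing identically on any component), hence a torsion-free $\bC[t_4,\dots,t_{12}]$-module, of the same rank $8$ by the previous paragraph. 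Hence the kernel of the surjection has rank $0$, so is a torsion submodule of a torsion-free module, so vanishes, and the surjection is the asserted isomorphism.

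The places where I expect genuine difficulty are the geometric identification in the first step — constructing the K3 family and carrying out the period computation precisely enough to obtain $\overline{M} = \bfP(2,3,4,5,6)$ together with the coordinates $t_i$ — and the explicit determination of $\Delta_8(t)$, $\Delta_{20}(t)$, and $\Delta_{60}(t)$; the character computation, the divisibility argument, and the closing rank comparison are then essentially formal.
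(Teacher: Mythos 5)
Your overall skeleton (split $\Atilde$ by characters, exhibit square-root forms $s_4,s_{10},s_{30}$, prove divisibility, and close with a rank-$8$ comparison over $\bC[t]$) is coherent, and the divisibility and rank arguments at the end are fine. But the steps you label ``essentially formal'' are exactly where the real content lies, and as written they are gaps. First, the existence of $s_4$ and $s_{30}$ is asserted (``Borcherds products \dots or holomorphic square roots of discriminants'') but never established. A square root of $\Delta_8$ (resp.\ $\Delta_{60}$) exists as a single-valued modular form with character precisely because the zero divisor of $\Delta_8$ (resp.\ $\Delta_{60}$) in $\cD/\rO(T_1)$ is a branch divisor of $\cD \to \cD/\rO(T_1)$, i.e.\ is swept out by mirrors of reflections in the group, with multiplicity one; for $\Delta_{60}$ this requires identifying the $U \bot E_7 \bot E_7 \bot A_1$ locus, extracting $\Delta_{60}=k_{120}/r_{20}^3$ from the discriminant, and checking irreducibility --- none of which is ``formal'', and no Borcherds-product construction is actually carried out or cited with the required weights and divisors. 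Second, and more seriously, your surjectivity step needs a completeness statement: that the mirrors of reflections of $\rO(2,4;\bZ)$ map onto exactly the three divisors $\{\Delta_8=0\}$, $\{\Delta_{20}=0\}$, $\{\Delta_{60}=0\}$, and that every character of $\rO(2,4;\bZ)$ is determined by its values on these three reflection classes (equivalently, $\Char \cong (\bZ/2\bZ)^3$ generated by the characters of $s_4,s_{10},s_{30}$). Without this, after dividing $f$ by your monomial $s_\chi$ you only know the residual character is trivial on the three known reflection classes, not that it is trivial, so you cannot conclude $f/s_\chi \in A(\Gamma)$; and if there were a further reflective divisor, the ring would need an additional generator. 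Your one-line justification (``a computation with a set of reflection generators and the discriminant form'') is not a proof and is not obviously easier than the theorem itself.

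For contrast, the paper's proof in \pref{sc:UE7E7}--\pref{sc:O24} is organized precisely to settle these points: the Weierstrass-model computation produces $A(\Gamma_1)=\bC[t_4,t_6,t_8,t_{10},s_{10},t_{12}]/(s_{10}^2-\Delta_{20})$ with $s_{10}$ given explicitly by coordinates on the moduli space, and then the comparison of the orbifold $\bM=[\cD/\rO(T_1)]$ with the root stack $\bT$ over $\bP(4,6,8,10,12)$ along $\Delta_8\Delta_{20}\Delta_{60}$, via the ramification formula and the identity $\omega_{\bM} \cong \cO_{\bM}(4)\otimes\det$, shows in one stroke that there are \emph{no further} divisorial stabilizers, that $\Pic\bM \cong \bZ \oplus (\bZ/2\bZ)^3$, and that $\Atilde(\rO(T_1))$ equals the Cox ring \pref{eq:TCR}; the characters $(\bZ/2\bZ)^3$ come out as a consequence rather than an input. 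If you want to keep your route, you must either reproduce such a classification of reflective divisors and of $\Char(\rO(2,4;\bZ))$ (e.g.\ by a Vinberg-type analysis of generators), or replace those assertions by the canonical-bundle/root-stack comparison; the explicit determination of $\Delta_{60}$, which you acknowledge, also still has to be done.
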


This paper is organized as follows:
In \pref{sc:UE7E7},
we prove that the coarse moduli scheme of
$U \bot E_7 \bot E_7$-polarized K3 surfaces
is the double cover of the weighted projective space
$\bfP(4,6,8,10,12)$
branched along the divisor defined by $\Delta_{20}(t)$.
We give a proof of \pref{th:main} in \pref{sc:O24}.
In \pref{sc:six_lines},
we discuss the relation with the configuration space of six lines on $\bfP^2$
following \cite{MR1136204}.

\emph{Acknowledgment}:
We thank Kenji Hashimoto for valuable discussions,
and the anonymous referee for suggestions for improvement.
A.~N.~was partially supported by JSPS Kakenhi (18K13383, MEXT LEADER).
K.~U.~was partially supported by JSPS Kakenhi (15KT0105, 16K13743, 16H03930).

\section{The coarse moduli space
of $U \bot E_7 \bot E_7$-polarized K3 surfaces}
 \label{sc:UE7E7}

Consider the even lattice
$
 P_1 \coloneqq U \bot E_7 \bot E_7
$
of signature $(1,15)$, and
fix a primitive embedding of $P_1$ into the K3 lattice $\KL$,
which is unique up to the action of $\rO(\KL)$
by \cite[Theorem 1.14.4]{MR525944}.
The orthogonal lattice $T_1 \coloneqq P_1^\bot$
is isometric to $U \bot U \bot A_1 \bot A_1$.
We write $\Gamma_1 \coloneqq \Gamma_{P_1}$.
Let $\sigma_1 \in \rO(T_1)$ be the reflection
along the $(-4)$-vector
obtained as the sum of the bases $(-2)$-vectors
of two $A_1$ summands in $T_1$.
It fixes $U \bot U \subset T_1$
and interchanges two $A_1$ summands.
It extends to an element of $\rO(\KL)$
whose restriction to $P_1$ interchanges two $E_7$ summands.
One can easily see that
the group
$
 \rO(T_1) / \Gamma_1
$
is generated by the class of $\sigma_1$
(see the proof of \pref{pr:Gamma3} below).

\begin{theorem} \label{th:main2}
The graded ring $A(\Gamma_1)$ of automorphic forms
with respect to $\Gamma_1$
is given by
\begin{align}
 A(\Gamma_1)
  = \bC[t_4, t_6, t_8, t_{10}, s_{10}, t_{12}] / (s_{10}^2 - \Delta_{20}(t)),
\end{align}
where the lower indices indicate the weights,
and $\Delta_{20}(t)$ is defined in \pref{eq:Delta_20}.
\end{theorem}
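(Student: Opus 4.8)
The plan is to identify $\cD/\Gamma_1$ with a coarse moduli space of K3 surfaces, describe that space explicitly as a double cover of $\bfP(4,6,8,10,12)$, and then read off the ring of modular forms from the Baily--Borel compactification. For the first step, a $P_1$-polarization on a K3 surface $Y$ gives a primitive embedding $U \bot E_7 \bot E_7 \hookrightarrow \Pic Y$; the copy of $U$ produces an elliptic fibration $Y \to \bfP^1$ with a section, and the two copies of $E_7$ force two singular fibres of Kodaira type $\mathrm{III}^*$. Placing these fibres over $s = 0$ and $s = \infty$, the Weierstrass form becomes $y^2 = x^3 + A(s)x + B(s)$ with $A = a_0 s^3 + a_1 s^4 + a_2 s^5$ and $B = b_0 s^5 + b_1 s^6 + b_2 s^7$, the open condition $a_0 a_2 \neq 0$ guaranteeing that the fibres over $0$ and $\infty$ are exactly of type $\mathrm{III}^*$. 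The residual symmetries of the data form the two-torus acting by $a_i \mapsto \lambda^{3+i}\mu^4 a_i$, $b_j \mapsto \lambda^{5+j}\mu^6 b_j$ (with $\lambda$ rescaling $s$ and $\mu$ acting by $x \mapsto \mu^2 x$, $y \mapsto \mu^3 y$), together with the involution $s \mapsto 1/s$ exchanging the two $\mathrm{III}^*$ fibres, which corresponds precisely to the generator $\sigma_1$ of $\rO(T_1)/\Gamma_1$. By the global Torelli theorem and the surjectivity of the period map recalled in the introduction, $\cD/\Gamma_1$ is the coarse moduli space of pseudo-ample $P_1$-polarized K3 surfaces, hence the GIT quotient of the space of nondegenerate tuples $(a_i, b_j)$ by $\bCx \times \bCx$, with $\sigma_1$ acting by $(a_0, a_1, a_2, b_0, b_1, b_2) \mapsto (a_2, a_1, a_0, b_2, b_1, b_0)$.

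Next I would produce the weighted projective model. The holomorphic two-form of $Y$ scales by $\mu^{-1}$ and is unchanged by $\lambda$, so the automorphic line bundle $\mathcal{L}$ on $\cD/\Gamma_1$ --- the descent of the principal $\bCx$-bundle $\cDtilde \to \cD$ --- pulls back to the bundle whose weight-$k$ sections are the $\bCx_\lambda$-invariant, $\bCx_\mu$-weight-$k$ rational functions of the $(a_i, b_j)$ that are regular on the moduli open set. I would then compute the invariants of the group generated by the two-torus $\bCx_\lambda \times \bCx_\mu$ and $\sigma_1$: the $\sigma_1$-invariant part should be a polynomial ring on five generators $t_4, t_6, t_8, t_{10}, t_{12}$ of the indicated weights, so that $\cD/\rO(T_1) = (\cD/\Gamma_1)/\langle \sigma_1 \rangle$ completes to $\bfP(4,6,8,10,12)$, while the $\sigma_1$-anti-invariant part should be a free rank-one module over it, generated by a weight-$10$ element $s_{10}$; since $s_{10}^2$ is $\sigma_1$-invariant it automatically lies in the weight-$20$ piece $\bC[t]_{20}$ and hence equals an explicit polynomial $\Delta_{20}(t)$. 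Equivalently, $\cD/\Gamma_1$ is the double cover of $\bfP(4,6,8,10,12)$ branched along $\{\Delta_{20} = 0\}$, the branch divisor being the locus where $Y$ acquires the symmetry exchanging its two $\mathrm{III}^*$ fibres; normality of $\cD/\Gamma_1$ then forces $\Delta_{20}$ to be squarefree. Throughout I would keep track of the loci that are discarded (unstable points, $a_0 a_2 = 0$, non-minimal Weierstrass models, $\Delta \equiv 0$) and verify that their complements are everywhere of codimension at least $2$.

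For the passage to modular forms: since $T_1$ has signature $(2,4)$ with $n = 4 \geq 3$, the Koecher principle applies, so holomorphic modular forms for $\Gamma_1$ are automatically bounded at the cusps; by the Baily--Borel theorem $\mathrm{Proj}\,A(\Gamma_1)$ is the Baily--Borel compactification $(\cD/\Gamma_1)^*$ and $A_k(\Gamma_1) = H^0\!\left((\cD/\Gamma_1)^*, \mathcal{L}^{\otimes k}\right)$ for the descended and extended automorphic sheaf $\mathcal{L}$. The Baily--Borel boundary, being a union of points and curves, has codimension $\geq 2$ in the $4$-dimensional $(\cD/\Gamma_1)^*$, so by normality and Hartogs these sections agree with sections of $\mathcal{L}^{\otimes k}$ over the moduli open set, which by the previous step are the weight-$k$ parts of $\bC[t_4, t_6, t_8, t_{10}, t_{12}, s_{10}]/(s_{10}^2 - \Delta_{20}(t))$. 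Hence $A(\Gamma_1)$ is the homogeneous coordinate ring of the weighted hypersurface of degree $20$ defined by $s_{10}^2 = \Delta_{20}(t)$ in $\bfP(4,6,8,10,12,10)$ (the last weight being that of $s_{10}$), which is the assertion; the explicit formula \pref{eq:Delta_20} drops out of the invariant-theoretic computation as a discriminant-type expression in the $a_i$ and $b_j$.

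The step I expect to be the main obstacle is the combination of the invariant theory in the second paragraph with the codimension bookkeeping: proving that the invariants are \emph{exactly} $\bC[t_4,t_6,t_8,t_{10},t_{12},s_{10}]/(s_{10}^2 - \Delta_{20})$ with no spurious generators and no further relations, that the Weierstrass family genuinely surjects onto $\cD/\Gamma_1$, and that every degenerate or unstable locus that must be excised has codimension at least $2$ --- so that the ring of modular forms really is the full coordinate ring of the double cover, with no correction terms --- and then pinning down the polynomial $\Delta_{20}$ itself.
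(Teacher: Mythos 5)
Your overall strategy is the same as the paper's (Weierstrass models of elliptic K3 surfaces with two marked $E_7$-type fibres over $0$ and $\infty$, invariant theory for the two $\bCx$-actions, then a Koecher/codimension-two argument identifying modular forms with regular functions), but there is one concrete error that breaks the argument as written: the claim that the two $E_7$ summands force the fibres over $0$ and $\infty$ to be \emph{exactly} of Kodaira type $\mathrm{III}^*$, enforced by the open condition $a_0 a_2 \neq 0$. An $E_7$-configuration is also contained in a fibre of type $\mathrm{II}^*$, and $P_1$-polarized (pseudo-ample) K3 surfaces with a $\mathrm{II}^*$ fibre do occur in $\cD/\Gamma_1$: they form the interior divisor where the Picard lattice contains $U \bot E_8 \bot E_7$, which is precisely the locus $t_8 = a_0 a_2 = 0$ (the divisor $\Delta_8$ of Section 3 of the paper). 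The correct condition, used in the paper, is only $\ord_0 g_2 \ge 3$, $\ord_0 g_3 \ge 5$ (and similarly at $\infty$), with the excised locus being the non-RDP locus $\{a_0 = b_0 = 0\} \cup \{a_2 = b_2 = 0\}$, which has codimension two.

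This is not a cosmetic point: by imposing $a_0 a_2 \neq 0$ you delete a codimension-one subset, so your own requirement that all discarded loci have codimension at least two fails, and the Hartogs/Baily--Borel step then computes sections only over the complement of the divisor $\{t_8 = 0\}$. That ring is strictly larger than $A(\Gamma_1)$ --- for instance $t_{10}^2/t_8$ is a $\bCx$-invariant regular function of weight $12$ on your parameter space --- so the conclusion $A(\Gamma_1) = \bC[t_4,t_6,t_8,t_{10},s_{10},t_{12}]/(s_{10}^2-\Delta_{20})$ would not follow (indeed it would be false for the ring you actually describe). Once the $\mathrm{II}^*$ locus is included, your outline coincides with the paper's proof: the $\bCx$-invariants of the six coefficients are generated by $t_4, t_6, t_8, t_{10}, s_{10}, t_{12}$ with the single relation $s_{10}^2 = t_{10}^2 - 4 t_8 t_{12}$, and the boundary contributes nothing in codimension one, giving the theorem. (Your remaining deferrals --- the explicit invariant computation and the identification of $\Delta_{20}$ --- are exactly the short calculation the paper carries out, so they are gaps of execution rather than of method.)
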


\begin{proof}
Giving a $P_1$-polarized K3 surface is equivalent to giving an elliptic K3 surface
with a section and two singular fibers containing an $E_7$-configuration
(i.e., of Kodaira type $\mathrm{II}^*$ or $\mathrm{III}^*$),
which we may assume to lie above $0$ and $\infty$
on the base $\bfP^1$.
An elliptic K3 surface with a section
admits a Weierstrass model of the form
\begin{align} \label{eq:weierstrass1}
 z^2 = y^3 + g_2(x,w) y + g_3(x,w)
\end{align}
in $\bfP(1,4,6,1)$
(cf.~e.g.~\cite[Section 4]{MR2732092}).
Recall that the elliptic surface \eqref{eq:weierstrass1}
has a singular fiber of type either $\mathrm{II}^*$ or $\mathrm{III}^*$
at $a \in \bfP^1$
only if
%\begin{align}
$
 \ord_a g_2(x,w) \ge 3
$
%  \ \text{ and } \ 
and
$
 \ord_a g_3(x,w) \ge 5
$
%\end{align}
(cf.~e.g.~\cite[Table IV.3.1]{MR1078016}).
This requires
\begin{align}
 g_2(x,w) &= u_{5,3} x^5 w^3 + u_{4,4} x^4 w^4 + u_{3,5} x^3 w^5,
  \label{eq:weierstrass2} \\
 g_3(x,w) &= u_{7,5} x^7 w^5 + u_{6,6} x^6 w^6 + u_{5,7} x^5 w^7.
  \label{eq:weierstrass3}
\end{align}
It has a singularity worse than
rational double points
on the fiber at $a \in \bfP^1$
if and only if $\ord_a(g_2) \ge 4$ and
$\ord_a(g_3) \ge 6$
(cf.~e.g.~\cite[Proposition I\!I\!I.3.2]{MR1078016}).
This is the case if and only if either
$u_{3,5}=u_{5,7}=0$ (for $a = 0$)
or $u_{5,3}=u_{7,5}=0$
(for $a=\infty$).
The parameter
\begin{align}
 u = (u_{5,3}, u_{4,4}, u_{3,5}, u_{7,5}, u_{6,6}, u_{5,7})
  \in U \coloneqq \bC^6 \setminus
   \lc u_{3,5}=u_{5,7}=0 \text{ or } u_{5,3}=u_{7,5}=0 \rc
\end{align}
appearing in the Weierstrass model \pref{eq:weierstrass1}
is unique up to the action of $(\bCx)^2$
given by
\begin{align}
 \bCx \ni \lambda \colon
  ((x,y,z,w),(u_{i,j})_{i,j}) \mapsto (x,\lambda^{2} y,\lambda^{3} z,w), (\lambda^{(i+j)/2} u_{i,j})_{i,j})
\end{align}
and
\begin{align}
 \bCx \ni \mu \colon
  ((x,y,z,w),(u_{i,j})_{i,j}) \mapsto ((\mu^{-1} x,y,z, \mu w), (\mu^{i-j} u_{i,j})_{i,j}).
\end{align}
Note that the former $\bCx$-action rescales the holomorphic volume form
\begin{align}
 \Omega= \Res \frac{w dx \wedge dy \wedge dz}{z^2 - y^3 - g_2(x,w;u) y - g_3(x,w;u)}
\end{align}
as
\begin{align}
 \Omega_{\lambda u}
  &= \Res \frac{w dx \wedge d(\lambda^2 y) \wedge d (\lambda^3 z)}{(\lambda^3 z)^2 - (\lambda^2 y)^3 - g_2(x,w;\lambda \cdot u) (\lambda^2 y) - g_3(x,w;\lambda \cdot u)}
 = \lambda^{-1} \Omega_u,
\end{align}
whereas the latter
(which comes from the automorphism of the base $\bfP^1$
fixing $0$ and $\infty$)
keeps it invariant.
The categorical quotient
$
 T \coloneqq U / \bCx_\mu
$
is the coarse moduli scheme of pairs
$(Y, \Omega)$ consisting of a $P_1$-polarized K3 surface $Y$
and a holomorphic volume form $\Omega \in H^0(\omega_Y)$ on $Y$.
The coordinate ring $\bC[T]$ of $T$
is generated by six elements
\begin{align}
 t_4 &\coloneqq u_{4,4}, &
 t_6 &\coloneqq u_{6,6}, &
 t_8 &\coloneqq u_{5,3} u_{3,5}, \\
 t_{10} &\coloneqq u_{5,3} u_{5,7} + u_{3,5} u_{7,5}, &
 s_{10} &\coloneqq u_{5,3} u_{5,7} - u_{3,5} u_{7,5}, &
 t_{12} &\coloneqq u_{7,5} u_{5,7}
\end{align}
with one relation
\begin{align} \label{eq:Delta_20}
 s_{10}^2 = \Delta_{20}(t) \coloneqq t_{10}^2 - 4 t_8 t_{12}.
\end{align}
The boundary of the affinization
$
 \Tbar \coloneqq \Spec \bC[T]
$
is given by
\begin{align}
 \lc t_8 = t_{10} = s_{10} = t_{12} = 0 \rc \cong \bC_{t_4} \times \bC_{t_6}.
\end{align}
The period map induces an isomorphism
of the graded ring of modular forms
and the coordinate ring $\bC[T]$.
The weight of the modular form is identified
with the weight of the $\bCx_\lambda$-action, and
\pref{th:main} is proved.
\end{proof}

\section{Modular forms of $\rO(2,4;\bZ)$ with characters}
 \label{sc:O24}

The lattice $T_1$ has a unique extension
$
 T_1 \subset T_2 \subset T_1 \otimes \bQ
$
of index 2
to an odd unimodular lattice
$
 T_2
  \cong U^{\perp 2} \bot \la -1 \ra^{\bot 2}
  \cong \la 1 \ra^{\bot 2} \bot \la -1 \ra^{\bot 4},
$
and $T_1$ is the sublattice of $T_2$
consisting of even elements;
\begin{align}
 T_1 \cong \lc v \in T_2 \relmid \pair{v}{v}  \in 2 \bZ \rc.
\end{align}
It follows that
$
 \rO(T_2)
  \cong \rO(2,4;\bZ)
$
can naturally be identified with $\rO(T_1)$
as a subgroup of $\rO(T_1 \otimes \bQ)$,
so that
\begin{align}
 A(\rO(T_2)) \cong A(\rO(T_1)) = A(\Gamma_1)^{\la \sigma_1 \ra}.
\end{align}
Since $\sigma_1$ acts on $U$ by sending $u_{i,j}$ to $u_{j,i}$,
one obtains a proof of the following:

\begin{theorem}[{\cite[Theorem 1]{MR2718942}}]
 \label{th:Vinberg}
The graded ring $A(\rO(2,4;\bZ))$ of automorphic forms
is given by
\begin{align}
 A(\rO(2,4;\bZ))
  = \bC[t_4, t_6, t_8, t_{10}, t_{12}]
\end{align}
where the lower indices indicate the weights.
\end{theorem}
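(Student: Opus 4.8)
The plan is to combine the computation of $A(\Gamma_1)$ in \pref{th:main2} with the identification $A(\rO(2,4;\bZ)) \cong A(\Gamma_1)^{\la \sigma_1 \ra}$ recorded just above, by making the $\sigma_1$-action on the generators explicit. Since $\sigma_1$ acts on the parameter space $U$ by $u_{i,j} \mapsto u_{j,i}$, I would first read off its effect on the six generators appearing in the proof of \pref{th:main2}: it fixes $t_4 = u_{4,4}$, $t_6 = u_{6,6}$, $t_8 = u_{5,3} u_{3,5}$, $t_{10} = u_{5,3} u_{5,7} + u_{3,5} u_{7,5}$, and $t_{12} = u_{7,5} u_{5,7}$, while it negates $s_{10} = u_{5,3} u_{5,7} - u_{3,5} u_{7,5}$. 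Thus $\sigma_1$ is the graded involution of $A(\Gamma_1)$ fixing every generator except $s_{10}$, which it sends to $-s_{10}$.

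Next I would describe $A(\Gamma_1)$ as a free module over $R \coloneqq \bC[t_4, t_6, t_8, t_{10}, t_{12}]$. Since $s_{10}^2 = \Delta_{20}(t) = t_{10}^2 - 4 t_8 t_{12} \in R$, one has $A(\Gamma_1) = R + R s_{10}$; and because $\Delta_{20}(t)$ is not a square in the fraction field of the polynomial ring $R$ (it is squarefree), this sum is direct, so $A(\Gamma_1) = R \oplus R s_{10}$ is free of rank $2$ over $R$. The involution $\sigma_1$ then acts as $+1$ on the summand $R$ and as $-1$ on $R s_{10}$, so $A(\Gamma_1)^{\la \sigma_1 \ra} = R$. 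Finally, $t_4, t_6, t_8, t_{10}, t_{12}$ are algebraically independent: from the presentation $A(\Gamma_1) = \bC[t_4, t_6, t_8, t_{10}, s_{10}, t_{12}]/(s_{10}^2 - \Delta_{20}(t))$ the ring $A(\Gamma_1)$ has Krull dimension $5$ and is finite over $R$, hence $\dim R = 5$, which forces the five generators of $R$ to be independent and $R$ to be a genuine polynomial ring. Combining everything, $A(\rO(2,4;\bZ)) \cong A(\Gamma_1)^{\la \sigma_1 \ra} = \bC[t_4, t_6, t_8, t_{10}, t_{12}]$, reproving \cite[Theorem 1]{MR2718942}.

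I do not anticipate a genuine difficulty here, since essentially all of the content is already in \pref{th:main2}; the two points that warrant a line of justification are that the anti-invariant part of $A(\Gamma_1)$ is exactly $R s_{10}$, which is immediate once the rank-$2$ free decomposition is in place, and the algebraic independence of the $t_i$, which follows from a short dimension count.
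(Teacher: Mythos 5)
Your proposal is correct and follows exactly the paper's route: the paper deduces \pref{th:Vinberg} from \pref{th:main2} together with the identification $A(\rO(2,4;\bZ)) \cong A(\rO(T_1)) = A(\Gamma_1)^{\la \sigma_1 \ra}$ and the observation that $\sigma_1$ sends $u_{i,j}$ to $u_{j,i}$, hence fixes $t_4,t_6,t_8,t_{10},t_{12}$ and negates $s_{10}$. You merely spell out the invariant-theoretic bookkeeping (the free decomposition $A(\Gamma_1)=R\oplus R s_{10}$ and the algebraic independence of the $t_i$) that the paper leaves implicit, and these details are accurate.
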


In fact,
this proof of \pref{th:Vinberg} already appears
in \cite{MR4015343}.
In particular,
\pref{eq:weierstrass1}
is identical to \cite[(4.13)]{MR4015343}
up to an obvious change of coordinates.
Note that `isomorphisms of $H \oplus E_7(-1) \oplus E_7(-1)$
lattice polarized K3 surfaces'
in \cite[Proposition 4.3.(b)]{MR4015343}
come from the action of $\sigma_1$,
and as such are not isomorphisms of lattice polarized K3 surfaces
in the sense of \cite{MR1420220}.

The coarse moduli space
$
 M \coloneqq \cD/\rO(T_1)
$
of $P_1$-polarized K3 surfaces
up to the action of $\sigma_1$
is an open subvariety
of its Satake--Baily--Borel compactification
$
 \Proj A(\rO(T_1))
  \cong \bfP(4,6,8,10,12).
%  \cong \bfP(2,3,4,5,6).
$
Although $M$ and
the the orbifold quotient
$
 \bM \coloneqq \ld \cD/\rO(T_1) \rd
$
are closely related,
the canonical morphism $\bM \to M$
is not an isomorphism
even in codimension 1.
In order to obtain an orbifold
which is isomorphic to $\bM$ in codimension 1
(so that
%the Picard groups and 
the total coordinate rings are isomorphic),
we first consider the stacky weighted projective space
$
 \bP \coloneqq \bP(4,6,8,10,12),
$
defined as the quotient of $\bC^5 \setminus \bszero$
by a $\bCx$-action with this weight.
The morphism
$
 \bM \to M
$
lifts to a morphism
$
 \bM \to \bP,
$
which is an isomorphism in codimension 0,
since the generic stabilizers are $\lc \pm \id \rc$
on both sides.

Stabilizers of $\bM$ along divisors come from
reflections,
and besides $\sigma_1$ appearing above,
there are two more reflections that one can easily find
in $\rO(T_1)$.
The first one, which we call $\sigma_2$,
is the reflection along the $(-2)$-vector
whose reflection hyperplane is defined by
\begin{align} \label{eq:Delta_8}
 \Delta_8(t) \coloneqq t_8.
\end{align}
In terms of $P_1$-polarized K3 surfaces,
this divisor corresponds to the locus
where the Picard lattice contains
$
 U \bot E_7 \bot E_8.
%  \supset U \bot E_7 \bot E_7.
$
The second one, which we call $\sigma_3$,
is the reflection along the $(-2)$-vector
whose reflection hyperplane
corresponds to the locus
where the Picard lattice contains
$U \bot E_7 \bot E_7 \bot A_1$.
In order to describe this locus,
first consider the discriminant
$
 4 g_2(x,w;t)^3 + 27 g_3(x,w;t)^2
$
of
$
 y^3 + g_2(x,w;t) y + g_3(x,w;t)
$
as a polynomial in $y$,
which is the product of $x^9 w^9$
and a homogeneous polynomial $h(x,w;t)$,
of degree 6 in $(x,w)$
and degree 12 in $t$.
Note that the discriminant of the polynomial
$
 \sum_{i=0}^n a_i x^i w^{n-i}
$
with respect to $(x,w)$
is homogeneous of degree $2(n-1)$
in $\bZ[a_0, \ldots, a_n]$
if $\deg a_0 = \cdots = \deg a_n = 1$.
It follows that the discriminant
$k_{120}(t)$ of $h(x,w;t)$ with respect to $(x,w)$
is a homogeneous polynomial
of degree $2 \cdot 5 \cdot 12 = 120$ in $t$.
A general point
on the divisor of $\bfP(4,6,8,10,12)$
defined by $k_{120}(t)$
corresponds to the locus
where two fibers of Kodaira type $\mathrm{I}_1$ collapse
into one fiber.
This divisor has two components;
a general point on one corresponds to the case
when there exists a point $p = [x:w] $ on $\bfP^1$ such that
neither $g_2$ nor $g_3$ vanishes at $p$,
and a general point on the other component
corresponds to the case
when both $g_2$ and $g_3$ vanishes at $p$.
In the former case,
the resulting singular fiber is of Kodaira type $\mathrm{I}_2$,
and the surface acquires an $A_1$-singularity.
In the latter case,
the resulting singular fiber is of Kodaira type I\!I,
and the surface does not acquire any new singularity.
%The defining equation of $D_1$ is $\Delta_T$.
The defining equation of the latter component
is the resultant of $g_2$ and $g_3$.
It is given as the determinant
\begin{align}
 r_{20}(t)=
\left|
\begin{array}{cccc}
 u_{5,3} & u_{4,4} & u_{3,5} & \\
 & u_{5,3} & u_{4,4} & u_{3,5} \\
 u_{7,5} & u_{6,6} & u_{5,7} & \\
 & u_{7,5} & u_{6,6} & u_{5,7}
\end{array}
\right|
\end{align}
of the Sylvester matrix,
which is homogeneous of degree 20.
As shown in \cite[Lemma 6.1]{1406.0332},
the polynomial $k_{120}(t)$ is divisible by $r_{20}(t)^3$, and
a direct calculation using a computer algebra system
shows that the quotient
\begin{align} \label{eq:Delta_60}
 \Delta_{60}(t) \coloneqq k_{120}(t)/r_{20}(t)^3
\end{align}
is irreducible.

Recall from
\cite{MR2450211,MR2306040}
that the \emph{root construction}
is an operation
which adds a stabilizer
along a divisor.
Let $\bT$ be the stack
obtained from $\bP$
by the root construction of order 2
along the divisor on $\bP$
defined by $\Delta_{88}(t) \coloneqq \Delta_8(t) \Delta_{20}(t) \Delta_{60} (t)$,
which is the quotient of the double cover of $\bP$
branched along $\Delta_{88}(t)$
by the group $G$ of deck transformations.
The Picard group of $\bT$
(or the $G$-equivariant Picard group of $\bP$)
is generated by the pull-back
$
 \cO_{\bT}(1) \coloneqq p^* \cO_{\bP}(1)
$
of the generator $\cO_{\bP}(1)$ of the Picard group of $\bP$
by the structure morphism
$
 p \colon \bT \to \bP
$
and three line bundles $\cO_{\bT}(\bD_i)$ for $i=4,10,30$
such that the space
$
 H^0(\cO_{\bT}(\bD_i))
$
is generated by an element $s_i$
satisfying
$
 s_i^2
  = \Delta_i
  \in H^0(\cO_{\bT}(i))
  \cong H^0(\cO_{\bP}(i)).
$
The ramification formula for the canonical bundle gives
\begin{align}
 \omega_{\bT}
  &\cong p^* \omega_{\bP} \otimes \cO_{\bT}(\bD_4+\bD_{10}+\bD_{30}) \\
  &\cong \cO_{\bT}(-40) \otimes \cO_{\bT}(\bD_4+\bD_{10}+\bD_{30}) \\
  &\cong \cO_{\bT}(4) \otimes \cO_{\bT}(-44+\bD_4+\bD_{10}+\bD_{30}).
   \label{eq:omega_T}
\end{align}
Note that $\cO_{\bT}(-44+\bD_4+\bD_{10}+\bD_{30})$ is an element
of order two in $\Pic \bT$.
By comparing \pref{eq:omega_T} with
\begin{align} \label{eq:omega}
 \omega_{\bM} \cong \cO_{\bM}(4) \otimes \det
\end{align}
which follows from (the proof of) \cite[Proposition 5.1]{1406.0332},
one concludes that $\bM$ has no further stabilizer along a divisor,
so that the lift $\bM \to \bT$ of $\bM \to \bP$ is an isomorphism
in codimension 1.
It follows that $\Pic \bM \cong \Pic \bT$
is isomorphic to $\bZ \oplus (\bZ/2\bZ)^3$,
and the total coordinate ring
(also known as the Cox ring) of $\bM$ is given by
\begin{align} \label{eq:TCR}
 \bigoplus_{\cL \in \Pic \bM} H^0(\cL)
  \cong \bC[t_4,t_6,t_8,t_{10},t_{12},s_4,s_{10},s_{30}]
   / (s_4^2 -\Delta_8(t), s_{10}^2-\Delta_{20}(t),s_{30}^2-\Delta_{60}(t)).
\end{align}
A character of $\rO(T_1)$ gives a line bundle on $\bM$,
so that the ring $\Atilde(\rO(T_1))$ of modular forms with characters
is a subring of the total coordinate ring,
which in fact is the whole of it
since any line bundle on $\bM$ comes from a character
in the case at hand.
This can be seen by noting that
the universal cover $\cD \to \bM$ factors
through a $(\bZ/2\bZ)^3$-cover
defined by the equation
appearing on the right hand side of \pref{eq:TCR},
so that the line bundles $\cO_\bT(-i+\bD_i)$
for $i=4,10,30$ come from characters
of the orbifold fundamental group of $\bM$,
which is isomorphic to $\rO(T_1)$ by definition.

Note that
\pref{eq:cong} below produces
two distinct elements of order two
in $\Char(\rO(T_1))$;
one comes from the sign representation of $\fS_6$
and the other comes from the unique non-trivial representation of
$\la \tau \ra$.
Yet another element of order two comes from
the determinant representation
$
 \det \colon \rO(T_1) \to \{ \pm 1 \},
$
which does not belong to the subgroup of $\Char(\rO(T_1))$
generated by the above two characters.
Since $\det(g) = -1$ for any reflection $g \in \rO(T_1)$,
the character of the modular form $s_4 s_{10} s_{30}$ is $\det$.
As we explain in \pref{sc:six_lines},
the characters of the modular forms $s_{10}$ and $s_{30}$
come from the non-trivial representation of $\la \tau \ra$
and the sign representation of $\fS_6$ respectively.

\section{Configuration of six lines on the plane}
 \label{sc:six_lines}

Let $L_1, \ldots, L_6$ be six lines on $\bfP^2$
in very general position,
and $Y$ be the K3 surface
obtained as the resolution
of 15 ordinary double points
on the double cover of $\bfP^2$
branched along the union of these six lines.
Fix an isometry $H^2(Y;\bZ) \cong \KL$ and
regard the Picard lattice $P_3$ of $Y$
as a sublattice of $\KL$.
The lattice $P_3$ is an even lattice of signature $(1,15)$
generated by the classes of strict transforms of the lines
and 15 exceptional divisors.
The primitive embedding of $P_3$ into $\KL$ is unique up to the action of $\rO(\KL)$
by \cite[Theorem 1.14.4]{MR525944}.
The orthogonal lattice $T_3$ of $P_3$
inside $\KL$ is isometric to
$
 T_2(2) \cong U(2) \bot U(2) \bot A_1 \bot A_1,
$
so that one has
\begin{align}
 \rO(T_3) \cong \rO(T_2).
\end{align}
Set
$
 \Gamma_3 \coloneqq \Gamma_{P_3}
$
and
$
 \Gamma(2) \coloneqq \lc g \in \rO(T_3) \relmid g \equiv \id_{T_3} \mod 2 \rc.
$
The intersection
$
\Gamma(2)^+ \coloneqq \Gamma(2) \cap \rO^+(T_3)
$
of $\Gamma(2)$
with the subgroup $\rO^+(T_3) \subset \rO(T_3)$ of index two
preserving the connected component $\cD^+$ of $\cD = \cD^+ \cup \cD^-$
is the monodromy group of the hypergeometric function of type (3,6)
studied extensively in \cite{MR1136204}.

\begin{proposition} \label{pr:Gamma3}
One has
$
 \Gamma_3 = \Gamma(2).
$
\end{proposition}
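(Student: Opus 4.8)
The plan is to reduce the statement to a computation of discriminant forms. First I would invoke the standard consequence of Nikulin's theory \cite{MR525944}: since $P_3 \subset \KL$ is a primitive sublattice of a unimodular lattice with orthogonal complement $T_3$, restriction to $T_3$ identifies $\Gamma_3 = \Gamma_{P_3}$ with the \emph{stable orthogonal group}
\[
 \widetilde{\rO}(T_3) \coloneqq \ker\left( \rO(T_3) \to \rO(A_{T_3}) \right), \qquad A_{T_3} \coloneqq T_3^\vee / T_3,
\]
the subgroup of isometries of $T_3$ acting trivially on the discriminant form. Indeed, an isometry of $\KL$ fixing $P_3$ preserves $T_3 = P_3^\bot$ and acts trivially on $A_{T_3} \cong A_{P_3}$; conversely an isometry of $T_3$ trivial on $A_{T_3}$ extends by the identity on $P_3$ to an isometry of the overlattice $\KL \supset P_3 \bot T_3$, uniquely since $P_3 \bot T_3$ has finite index in $\KL$.

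Next I would compute $\widetilde{\rO}(T_3)$, using that $T_3 \cong U(2) \bot U(2) \bot A_1 \bot A_1 \cong T_2(2)$ with $T_2$ unimodular: each summand $L$ of this description satisfies $L^\vee = \tfrac12 L$, so $T_3^\vee = \tfrac12 T_3$. Hence $v \mapsto \tfrac12 v$ induces an isomorphism of abelian groups $T_3 / 2T_3 \cong \tfrac12 T_3 / T_3 = A_{T_3}$, and this isomorphism is $\rO(T_3)$-equivariant, the action of $\rO(T_3)$ on $T_3/2T_3$ being reduction modulo $2$. Therefore $g \in \rO(T_3)$ acts trivially on $A_{T_3}$ if and only if $g \equiv \id \pmod 2$, i.e.\ $\widetilde{\rO}(T_3) = \Gamma(2)$; together with the first paragraph this gives $\Gamma_3 = \Gamma(2)$.

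The same argument also yields the assertion used in \pref{sc:UE7E7}: one has $\Gamma_1 = \widetilde{\rO}(T_1)$ verbatim, and since $A_{T_1} \cong (\bZ/2\bZ)^{2}$ carries the discriminant form of $A_1 \bot A_1$ — whose orthogonal group has order $2$, generated by the interchange of the two $A_1$-summands, which is the image of $\sigma_1$ — the homomorphism $\rO(T_1) \to \rO(A_{T_1})$ is surjective and $\rO(T_1)/\Gamma_1 = \la \sigma_1 \ra$. The one place that asks for care is the equivariance claim above: one must be sure that the $\rO(T_3)$-action transported through the identification $A_{T_3} \cong T_3/2T_3$ really is the mod-$2$ reduction, which becomes transparent once that identification is written via the explicit formula $\tfrac12 v + T_3 \mapsto v + 2T_3$. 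Beyond this bookkeeping I do not expect any serious obstacle.
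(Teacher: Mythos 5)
Your proof is correct, and its second half takes a genuinely different route from the paper's. Both arguments start the same way, invoking \cite[Corollary 1.5.2]{MR525944} to identify $\Gamma_3$ with the kernel of $\rO(T_3) \to \Aut(T_3^\vee/T_3)$. The paper then proves the two inclusions by importing structural results from \cite{MR1136204}: the group $\Gamma(2)^+$ is generated by 20 explicit $(-2)$-reflections, each of which is checked to act trivially on $T_3^\vee/T_3$ (giving $\Gamma(2) \subset \Gamma_3$), and every nontrivial class in the explicitly described quotient $\rO(T_3)/\Gamma(2) \cong \fS_6 \times \bZ/2\bZ$ is checked to act nontrivially (giving the reverse inclusion). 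Your observation that $T_3 \cong T_2(2)$ with $T_2$ unimodular forces $T_3^\vee = \tfrac12 T_3$, so that $v \mapsto \tfrac12 v$ gives an $\rO(T_3)$-equivariant isomorphism $T_3/2T_3 \cong T_3^\vee/T_3$ carrying the discriminant action to reduction mod $2$, makes the equality of the kernel with $\Gamma(2)$ immediate and eliminates all case-checking; it is shorter and essentially self-contained (it uses \cite{MR1136204} only through the identification of $T_3$, which the paper has already fixed before the proposition). What the paper's route buys is that the cited input is needed a few lines later anyway, namely the explicit quotient $\rO(T_3)/\Gamma_3 \cong \fS_6 \times \la \tau \ra$ of \pref{eq:cong}, which your argument does not produce: you get the equality of groups but not the structure of the quotient. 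Your closing remark recovering $\rO(T_1)/\Gamma_1 = \la \sigma_1 \ra$ from the order-two orthogonal group of the discriminant form of $A_1 \bot A_1$ is also correct and matches the paper's pointer from \pref{sc:UE7E7} to this proof.
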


\begin{proof}
It follows from \cite[Corollary 1.5.2]{MR525944} that
$\Gamma_3$ is the kernel of the natural homomorphism
from $\rO(T_3)$
to the group of automorphisms
of the discriminant group of $T_3$
(i.e., the quotient of the dual lattice
$T_3^\dual \coloneqq \Hom(T_3, \bZ)$
by the natural injection $T_3 \hookrightarrow T_3^\dual$).

It is shown in \cite[Proposition 2.7.3 and Corollary 2.7.4]{MR1136204}
that
$
\Gamma(2)^+
$
is a reflection group
generated by reflections along 20 $(-2)$-vectors
given on \cite[p.~103]{MR1136204}.
One can easily check that every reflection acts
on $T_3^\vee/T_3$ as the identity,
so that $\Gamma(2) \subset \Gamma_3$.

On the other hand,
it is shown in \cite[Proposition 2.8.2]{MR1136204} that
the quotient group
$
 \rO(T_3)/\Gamma(2)
$
is the finite group given by
\begin{align}
 \lc
  \begin{pmatrix} Y_4 &0 \\ 0 & \eta_2 \end{pmatrix}
 \relmid
  Y_4 \in \Sp(2, \bZ/2\bZ), \,
  \eta_2 \in \{I_2,U\}
 \rc,
\end{align}
where
\begin{align}
 \Sp(2,\bZ/2\bZ) \coloneqq
  \lc Y_4 \in \GL(4, \bZ/2\bZ) \relmid
   Y_4^T (U \oplus U) Y_4 = U\oplus U \rc.
\end{align}
Note that
$
 \Sp(2,\bZ/2\bZ)
$
is generated by
\begin{align}
 \begin{pmatrix} 1 & 1 & \\ 0 & 1& \\& & I_2 \end{pmatrix},
 \begin{pmatrix} U & \\ & I_2 \end{pmatrix},
 \begin{pmatrix} 1 &  & \\ & U& \\& & 1 \end{pmatrix},
 \begin{pmatrix} I_2 & \\ & U \end{pmatrix},
 \ \text{and} \ 
 \begin{pmatrix} I_2 &  & \\  & 1& 1 \\ &0 & 1 \end{pmatrix},
\end{align}
and isomorphic to the symmetric group $\fS_6$ of degree 6.
One can easily see that
if $h \in \rO(T_3)/\Gamma(2)$ is not the identity,
then $h$ induces a non-trivial transformation on $T_3^\vee/T_3$,
so that $\Gamma(2) = \Gamma_3$.
\end{proof}

We write the element of order 2
in $\rO(T_3) / \Gamma_3$
represented by $I_4 \oplus U$ as $\tau$,
so that
\begin{align} \label{eq:cong}
 \rO(T_3)/\Gamma_3 \cong \fS_6 \times \la \tau \ra.
\end{align}

The \emph{Igusa quartic} is
the Siegel modular variety of genus 2,
which can naturally be identified with the moduli spaces of
\begin{itemize}
 \item
principally polarized abelian surfaces,
 \item
genus two curves, and
 \item
hyperelliptic curves of genus two,
i.e., configurations of six points on $\bfP^1$.
\end{itemize}
It can be described as a quartic hypersurface in $\bfP^4$,
defined by the equations
\begin{align}
 \sum_{i=0}^5 x_i &= 0, \\
 \lb \sum_{i=0}^5 x_i^2 \rb^2 &= 4 \sum_{i=0}^5 x_i^4
\end{align}
in $\bfP^5$.
The GIT quotient
$
 \Xbar(3,6)
%  \coloneqq \Mat(3,6) \GIT \GL(3) \times (\bGm)^6
$
of
$
% \Mat(3,6) \GIT (\bGm)^6 \cong
 (\bfP^2)^6
$
by the action of $\PGL_3 \cong \Aut \bfP^2$
with respect to the democratic weight
is known to be the double cover of $\bfP^4$
%(i.e., a quartic hypersurface in $\bfP(1,1,1,1,2)$)
branched along the Igusa quartic
by \cite{MR1007155}.
The period map
from the configuration space $X(3,6)$
of six lines on $\bfP^2$
%(or six points on the dual $\bfP^2$)
in general position
to the modular variety
$
 M_3 \coloneqq \cD / \Gamma_3
$
extends to an isomorphism
from $\Xbar(3,6)$
to the Satake--Baily--Borel compactification $\Mbar_3$
of $M_3$
by \cite{MR1136204}.
As explained in \cite[Section A.2]{MR1136204},
the action of $\tau$ on $M_3$
gives an involution on $M_3$
%sends a configuration of six points on $\bfP^2$
%to the configuration of six points in association with it, and
whose fixed locus is the moduli space of six points on a conic,
which can naturally be identified
with the Igusa quartic;
the natural projection
$
 \Mbar_3 \to \Mbar_3 /\la \tau \ra \cong \bfP^4
$
is a double cover of $\bfP^4$
branched along the Igusa quartic.
The residual action of $\fS_6$ on $M_3 / \la \tau \ra$
is the projectivization
of the the natural action of $\fS_6$ on
$
 \lc (x_1, \ldots, x_6) \in \bA^6 \relmid x_1 + \cdots + x_6 = 0 \rc
$
by permutation of coordinates.
The quotient
\begin{align}
 \cD/\rO(T_3)
  \cong M_3 / \fS_6 \times \la \tau \ra
  \cong \bfP^4 / \fS_6
  \cong \Spec A(\rO(T_3))
\end{align}
is the weighted projective space
$
 \bfP(2,3,4,5,6) = \Proj \bC[t_4,t_6,t_8,t_{10},t_{12}],
$
where $t_{2i}$ are symmetric functions of degree $i$ in $x_1, \ldots, x_6$.
%Note that the weight of each $x_i$ is 2
%since the action of $\rO(2,4;\bZ)$ on $\cD$
%has the group $\lc \pm I \rc$ of order two
%as the generic stabilizer.
The projection
\begin{align}
 M_1 \coloneqq \cD/\Gamma_1
  \to \cD/\rO(T_1) \cong \bfP(2,3,4,5,6)
\end{align}
is the double cover branched along
the hypersurface defined by
$\Delta_{20}(t).$
It follows that the character of $s_{10}$
is the composite of
$
 \rO(2,4;\bZ)
  \cong \rO(T_3)
  \to \rO(T_3)/\Gamma_3
  \cong \fS_6 \times \la \tau \ra
  \to \la \tau \ra
$ 
and the non-trivial representation of $\la \tau \ra$.

Since the branch locus of the double cover of $\bfP(2,3,4,5,6)$
associated with the sign representation of $\fS_6$
is the discriminant $\Delta_{60}(t) = \prod_{1 \le i < j \le 6}(x_i-x_j)^2$,
the character of the modular form $s_{30}$ is the composite
of the surjection $\rO(2,4;\bZ) \to \fS_6$
and the sign representation of $\fS_6$.

\bibliographystyle{amsalpha}
\bibliography{bibs}

\def\cprime{$'$} \def\cprime{$'$}
\providecommand{\bysame}{\leavevmode\hbox to3em{\hrulefill}\thinspace}
\providecommand{\MR}{\relax\ifhmode\unskip\space\fi MR }
% \MRhref is called by the amsart/book/proc definition of \MR.
\providecommand{\MRhref}[2]{%
  \href{http://www.ams.org/mathscinet-getitem?mr=#1}{#2}
}
\providecommand{\href}[2]{#2}
\begin{thebibliography}{AGV08}

\bibitem[AGV08]{MR2450211}
Dan Abramovich, Tom Graber, and Angelo Vistoli, \emph{Gromov-{W}itten theory of
  {D}eligne-{M}umford stacks}, Amer. J. Math. \textbf{130} (2008), no.~5,
  1337--1398. \MR{2450211 (2009k:14108)}

\bibitem[BR75]{MR0447635}
Dan Burns, Jr. and Michael Rapoport, \emph{On the {T}orelli problem for
  k\"{a}hlerian {$K-3$} surfaces}, Ann. Sci. \'{E}cole Norm. Sup. (4)
  \textbf{8} (1975), no.~2, 235--273. \MR{0447635}

\bibitem[Cad07]{MR2306040}
Charles Cadman, \emph{Using stacks to impose tangency conditions on curves},
  Amer. J. Math. \textbf{129} (2007), no.~2, 405--427. \MR{2306040
  (2008g:14016)}

\bibitem[CMS19]{MR4015343}
A.~Clingher, A.~Malmendier, and T.~Shaska, \emph{Six line configurations and
  string dualities}, Comm. Math. Phys. \textbf{371} (2019), no.~1, 159--196.
  \MR{4015343}

\bibitem[DO88]{MR1007155}
Igor Dolgachev and David Ortland, \emph{Point sets in projective spaces and
  theta functions}, Ast\'{e}risque (1988), no.~165, 210 pp. (1989).
  \MR{1007155}

\bibitem[Dol96]{MR1420220}
I.~V. Dolgachev, \emph{Mirror symmetry for lattice polarized {$K3$} surfaces},
  J. Math. Sci. \textbf{81} (1996), no.~3, 2599--2630, Algebraic geometry, 4.
  \MR{1420220 (97i:14024)}

\bibitem[HU]{1406.0332}
Kenji Hashimoto and Kazushi Ueda, \emph{The ring of modular forms for the even
  unimodular lattice of signature (2,10)}, arXiv:1406.0332.

\bibitem[Mir89]{MR1078016}
Rick Miranda, \emph{The basic theory of elliptic surfaces}, Dottorato di
  Ricerca in Matematica. [Doctorate in Mathematical Research], ETS Editrice,
  Pisa, 1989. \MR{1078016 (92e:14032)}

\bibitem[MSY92]{MR1136204}
Keiji Matsumoto, Takeshi Sasaki, and Masaaki Yoshida, \emph{The monodromy of
  the period map of a {$4$}-parameter family of {$K3$} surfaces and the
  hypergeometric function of type {$(3,6)$}}, Internat. J. Math. \textbf{3}
  (1992), no.~1, 164. \MR{1136204}

\bibitem[Nik79a]{MR544937}
V.~V. Nikulin, \emph{Finite groups of automorphisms of {K}\"ahlerian {$K3$}
  surfaces}, Trudy Moskov. Mat. Obshch. \textbf{38} (1979), 75--137. \MR{544937
  (81e:32033)}

\bibitem[Nik79b]{MR525944}
\bysame, \emph{Integer symmetric bilinear forms and some of their geometric
  applications}, Izv. Akad. Nauk SSSR Ser. Mat. \textbf{43} (1979), no.~1,
  111--177, 238. \MR{525944 (80j:10031)}

\bibitem[P{\v{S}}{\v{S}}71]{MR0284440}
I.~I. Pjatecki{\u\i}-{\v{S}}apiro and I.~R. {\v{S}}afarevi{\v{c}},
  \emph{Torelli's theorem for algebraic surfaces of type {${\rm K}3$}}, Izv.
  Akad. Nauk SSSR Ser. Mat. \textbf{35} (1971), 530--572. \MR{0284440 (44
  \#1666)}

\bibitem[SS10]{MR2732092}
Matthias Sch{\"u}tt and Tetsuji Shioda, \emph{Elliptic surfaces}, Algebraic
  geometry in {E}ast {A}sia---{S}eoul 2008, Adv. Stud. Pure Math., vol.~60,
  Math. Soc. Japan, Tokyo, 2010, pp.~51--160. \MR{2732092 (2012b:14069)}

\bibitem[Tod80]{MR592693}
Andrei~N. Todorov, \emph{Applications of the
  {K}\"{a}hler-{E}instein-{C}alabi-{Y}au metric to moduli of {$K3$} surfaces},
  Invent. Math. \textbf{61} (1980), no.~3, 251--265. \MR{592693}

\bibitem[Vin10]{MR2718942}
E.~B. Vinberg, \emph{Some free algebras of automorphic forms on symmetric
  domains of type {IV}}, Transform. Groups \textbf{15} (2010), no.~3, 701--741.
  \MR{2718942}

\end{thebibliography}

\end{document}